\documentclass{article}
\usepackage{graphicx}
\usepackage{amssymb}
\usepackage{amsmath}
\usepackage{a4wide}
\usepackage{caption}
\usepackage{subcaption}
\usepackage[affil-it]{authblk}
\usepackage{xr}
\usepackage{color}
\usepackage{soul}
\usepackage{mathtools}

\usepackage[utf8]{inputenc} %WŁĄCZYĆ ABY POLSKIE ZNAKI CZYTAĆ
\usepackage[T1]{fontenc}
\newtheorem{theorem}{Theorem}[section]

\newtheorem{corollary}[theorem]{Corollary}

\newtheorem{lemma}[theorem]{Lemma}

\newtheorem{proposition}[theorem]{Proposition}
\newtheorem{remark}[theorem]{Remark}

\newenvironment{proof}[1][Proof]{\textbf{#1.} }{\ \rule{0.5em}{0.5em}}

\begin{document}
\title{On generalization of Zermelo navigation problem \\ on Riemannian manifolds}
\author{Piotr Kopacz}
\affil{\small{Jagiellonian University, Faculty of Mathematics and Computer Science\\ 6, Prof. S. Łojasiewicza, 30 - 348, Kraków,
Poland}}
\affil{Gdynia Maritime University, Faculty of Navigation \\3,  Al. Jana Pawła II, 81-345, Gdynia, Poland}
\date{\texttt{}}
\maketitle
\begin{abstract}
\noindent
We generalize the Zermelo navigation problem and its solution on Riemannian manifolds $(M, h)$ admitting a space dependence of a ship's own speed $|u(x)|_h\leq1$ in the presence of a perturbation $W$ determined by a mild velocity vector field $|W(x)|_h<|u(x)|_h$, with application of Finsler metric of Randers type. 
\end{abstract}

\ 

\smallskip
\noindent
\textbf{M.S.C. 2010}: 53B20, 53C21, 53C22, 53C60, 49J15, 49J53.
%53A55, 70H03\Lagrange equations, 49N90 - Applications of optimal control and differential games, 34H05- Control problems 

\smallskip
\noindent \textbf{Keywords:} Zermelo navigation, Riemann-Finsler manifold, Randers metric,  time-optimal path, perturbation.

%%%%%%%%%%%%%%%%%%%%%%%%%%%%%%%%%%%%%%%%%%%%

\section{Introduction}

The Zermelo navigation problem is a time-optimal control problem where the objective is to find the minimum time trajectories of a ship sailing in a space $M$, under the influence of a perturbation represented by a vector field $W$ thought of as a wind or a current. The problem was solved by E. Zermelo himself (1931) in $\mathbb{R}^2$ and $\mathbb{R}^3$ \cite{zermelo2, zermelo}, and generalized considerably (2004) in \cite{colleen_shen} for the case when sea a Riemannian manifold $(M, h)$ under the assumption that a wind $W$ is a time-independent weak wind, i.e. $h(W, W)<1$. In the absence of a wind the solutions to the problem are simply $h$-geodesics of $M$. The problem may be investigated as purely geometric. It has been found out that the path minimizing travel-time are exactly the geodesics of a special Finsler type $F$, namely Randers metric \cite{colleen_shen, chern_shen}. In other words solutions to the problem are the flows of Randers geodesics. The strong convexity condition $|W|_h<1$ ensures that $F$ is a positive definite Finsler metric. Furthermore, there is an equivalence between Randers metrics and Zermelo's problems \cite{colleen_shen}.   
%The problem may be completely geometrized. 

%Herdeiro- The interest in the problem has been revived in recent years due to its connection to Finsler geometry (see e.g. []). Assuming the wind to be time independent, it has been shown [] that the solutions to the navigation problem are the geodesics of a Randers metric []. Converesly, it has been shown [] that every Randers metric is the solution to a ZNP. Therefore there is an equivalence or duality between Randers metrics and Zermelo's problems; in other words solutions to the ZNP are Randers geodesics flows. 

The original problem can be generalized from different points of view. Therefore, to begin with, let us reflect for a moment at the genesis of the problem which actually combined the real applications referred to air or marine navigation and the corresponding theoretical research. First, the generalization of the initially formulated problem and its solution concerned dimension of the background Euclidean space. Zermelo's contribution was generalized shortly after by T. Levi-Civita \cite{levi} to an $n$-dimensional plane $\mathbb{R}^n$ and solved by means of variational calculus. Then, in particular, the problem was followed in detailed analysis with application of the Hamiltonian formalism by C. Carath\'{e}odory \cite{caratheodory}. Second, the generalization was connected to posing the problem in non-Euclidean scenario. In fact, it was focused on $\mathbb{S}^2$ for this was used as the model of earth. So in such a manner the solution has been extended to cover the case of flight on the surface of a sphere. The results by K. Arrow \cite{arrow} actually were applicable to any regular two dimensional surface since any such surface can be mapped conformally onto a plane. The obtained equations for the minimal path, motivated directly by real air navigation, also implied earlier results achieved by T. Levi-Civita. 

Setting as a reference point Zermelo's formulation of the problem the natural question arises, as yet another glance at the problem, whether a ship's own speed must be constantly maximal, i.e. $|u|_h=1$. This assumption we are going to drop considering the problem on Riemannian manifolds, however being in the case of a mild breeze which guarantees a full control of a navigating ship. Reviewing the bibliography in this scope one may find the paper by A. de Mira Fernandes \cite{mira} who accomodated shortly after Zermelo's contribution a varying magnitude velocity. Having added the extra degree of freedom the author allowed a time and space dependent velocity and solved the corresponding problem with the Euclidean background, namely in $\mathbb{R}^n$. Therefore, he has generalized the results of E. Zermelo and T. Levi-Civita for the Euclidean spaces to the case where the air speed is a preassigned function of position and time. De Mira Fernandes shows that the change in $|u|$ with time has no effect on the formula for the shortest time passage (time-optimal course) while that with space has the same effect as a corresponding change in wind \cite{arrow}. The contribution was referred in a modern approach by C. Herdeiro \cite{herdeiro} who discussed how, when both $W$ and $|u|$ are space but not time dependent, it can be recast in a purely geometric form as geodesics of a Randers geometry or as null geodesics in a stationary space-time. 

Finally, let us also mention the theoretical exposition by E. J. McShane in $\mathbb{R}^3$ \cite{mcshane}. 
%The problem which was considered by McSahne is closely related to the ZNP. 
The replacement of the sphere by the convex body (actually within the meaning of an indicatrix) was suggested by the fact that an airplane can travel faster down than up. McShane proved the existence of a solution of such a problem and considered the problem modified as to be a generalization of the Zermelo problem. That is the ship's velocity was required to be almost always as great as possible. Then he proved that  problem also was solvable. From such a point of view the problem seems to be closely related to another worth noticing contribution in Finsler geometry with application of a "slope-of-a-mountain" metric. There is an indicatrix of a man walking downhill and uphill on an inclined plane in $\mathbb{R}^3$ under acting the earth's gravity investigated \cite{matsumoto}. The latter contribution became the significant reference in later research on navigation problem in recent years, already strongly connected to Riemann-Finsler geometry. 

\

\section{Generalization of the problem and the solution}

Let a pair $ (M,h) $ be a Riemannian manifold where $h = h_{ij}dx^i\otimes dx^j$ is a Riemannian metric and the corresponding norm-squared of tangent vectors $y \in T_x M$ is denoted by $\left|y \right|_h^2 = h_{ij}y^iy^j = h(y, y)$. In the standard (classical) formulation we assume that a ship proceeds at constant speed relative to a perturbation, i.e. $|u|_h=1$, and the problem of time-efficient navigation is treated for the case of a mild wind $W$, namely $|W|_h < 1$ everywhere on $M$. The time-optimal paths on Riemannian manifold in unperturbed scenario are represented by the geodesics of the corresponding Riemannian metric. If we denote by $u$ the velocity of a ship in the absence of a wind then in the windy conditions a ship velocity will be given by the composed vector $v=u+W$. 

\ 

%For clarity / To reduce the clutter / and to show the differences / we are going to keep / adopt the same notation as in [] / adopt the notations used in []

%%%%%%%%%%%%%%%%%%%%%%%%%%%%%%%%%%%%%%%%

\subsection{Formula for modified Minkowski norm $\tilde{F}$}

We proceed being in line with the investigation on the original Zermelo navigation on Riemannian manifolds presented in \cite{colleen_shen} (ibid., pp. 381-384 for more details). After some modifications adapted to the new scenario and referring directly to the proof of Proposition 1.1 obtained therein we aim to point out differing consequences which come from both approaches. To reduce the clutter we also adopt the same notations if not otherwise stipulated. Given any Riemannian metric $h$ on a differentiable manifold $M$. In contrast to \cite{colleen_shen} we do not start with the fact $\left|u \right|_h =\sqrt{h(u, u)}= 1$. So far $W$ indicated the mild breeze with $|W|_h\in[0, 1)$. Now, first, we introduce the new assumption $|u|_h\in(|W|_h, 1]$. Hence, $|W|_h<|u|_h\leq1=u_{max}$. Both wind and ship's own speed are space-dependent, however they are not time-dependent. Into $h(u, u)$ we substitute $u = v-W$. Then $h(v, W ) = |v|_h |W|_h \cos \theta$ and $|u|_h^2=h(v-W, v-W)=|v-W|_h^2=|v|_h^2-2|v|_h|W|_h\cos\theta+|W|_h^2$, where \  $\theta\equiv\measuredangle\{v, W\}$. After using the abbreviation $ \tilde{\lambda} = |u|_h^2 - |W|_h^2$ we thus get  
$|v|_h^2 - 2 |v|_h|W|_h \cos \theta - \tilde{\lambda} = 0$. 
Since $|W|_h < |u|_h$ the resultant $v$ is always positive, hence $|v|_h > 0$. Solving the resulting quadratic equation and choosing the root that guarantees positivity yield 
\begin{equation}
|v|_h=|W|_h\cos\theta+\sqrt{|W|_h^2\cos^2\theta+|u|_h^2-|W|_h^2}, 
\end{equation}
which we can abbreviate as $\tilde{p} + \tilde{q}$. Since $\tilde{F}(v) = 1$, we see that
\begin{equation}
\tilde{F}(v) = \frac{|v|_h}{|v|_h}= |v|_h \frac {\tilde{q}-\tilde{p}} {\tilde{q}^2 - \tilde{p}^2} = \frac{ \sqrt{ \left[ h(W,v) \right]^2 + |v|_h^2 \tilde{\lambda}} - h(W,v)} {\tilde{\lambda}}.
\end{equation}
\noindent
Next, we need to deduce $\tilde{F} (y)$ for an arbitrary $y \in T M$. Every non-zero $y$ is expressible as a positive multiple $\tilde{c}$ of some $v$ with $\tilde{F}(v) = 1$. For $\tilde{c} > 0$, traversing $y=\tilde{c}v$ under the windy conditions should take $\tilde{c}$ units of time. This implies that $\tilde{F}$ is positively homogeneous,  $\tilde{F}(y)=c\tilde{F}(v)$. Using this
homogeneity and the formula derived for $\tilde{F}(v)$ the result is
\begin{equation}
\label{ran_mira}
\tilde{F}(y) = \frac{ \sqrt{ \left[ h(W,y) \right]^2 + |y|_h^2 (|u|_h^2-|W|_h^2)} } {|u|_h^2-|W|_h^2} - \frac{ h(W,y)} {|u|_h^2-|W|_h^2}.
\end{equation}
\noindent
The metric \eqref{ran_mira} is of Randers type. By hypothesis, $|W|_h < |u|_h$, hence $\tilde{\lambda} > 0$. We see that the formula for $\tilde{F}(y)$ is positive whenever $y \neq 0$ and $\tilde{F}(0) = 0$. The resulting Randers metric is composed of the modified new Riemannian metric and $1$-form in comparison to the analogous terms for the case of constant unit ship's speed through the water investigated in \cite{colleen_shen}. 

%%%%%%%%%%%%%%%%%%%%%%%%%%%%%%%%%%%%%%%%%%%%%%

\subsection{Modified Riemannian metric and 1-form}

\noindent
The resulting Randers metric $\tilde{F}$ can also be presented in the form $\tilde{F} = \tilde{\alpha} + \tilde{\beta}$ as the sum of two components. Explicitly,
\begin{itemize}
\item  the first term is the norm of $y$ with respect to a new Riemannian metric $\tilde{a}$
\begin{equation}
\tilde{\alpha}(x,y)  = \sqrt{\tilde{a}_{ij}(x)y^iy^j}, \quad \text{ where } \quad \tilde{a}_{ij} = \frac{ h_{ij}}{\tilde{\lambda}} + \frac{W_i}{\tilde{\lambda}}\frac{W_j}{\tilde{\lambda}},
\label{alpha_mira}
\end{equation}

\item the second term is the value on $y$ of a differential 1-form
\begin{equation}
\tilde{\beta}(x,y)=\tilde{b}_i(x)y^i,\quad \text{where}\quad  \tilde{b}_i = -\frac{W_i}{\tilde{\lambda}},
\label{beta_mira}
\end{equation}
\end{itemize}
where $W_i=h_{ij}W^j$ and $\tilde{\lambda}=|u|_h^2- W^i W_i=|u|_h^2-h(W, W) 
=|u|_h^2-|W|_h^2.$

\begin{remark}
For $h(u, u)=1=const.$ the formulae \eqref{alpha_mira} and  \eqref{beta_mira} lead to the Riemannian metric $\alpha$ and 1-form $\beta$ according to \cite{colleen_shen}, respectively, in the case of the original   Zermelo navigation problem on Riemannian manifolds $(M, h)$, under a mild perturbation with $|W|_h<1$. 
\end{remark}
\noindent
With the presence of $W$ the time-optimal paths are the geodesics of the Finsler metric $\tilde{F}$. However, with the absence of a perturbation the geodesics of the background Riemannian metric $h$ are not necessarily the solutions to the problem as it is in the case of $h(u, u)=1$. The difference is made by the influence of the new factor $|u(x)|_h$ which now becomes a spatial function of $x$. We shall apply the metric $\tilde{F}$ which correlates to the background $h$-geodesics modified by $|u(x)|_h$.

\begin{remark}
%The solution to the original Zermelo navigation problem was generalized initially by A. Mira de Fernandes (1932) in the case of $\mathbb{R}^n$ with time- and space-dependent ship's own speed \cite{mira}. 
A space-time viewpoint of the generalized problem was considered by C. Herdeiro (2009) who called it "Zermelo-Mira problem" and also, having followed \cite{colleen_shen}, included (independently of the author) the metric (\ref{ran_mira}); see the exposition in \cite{herdeiro}.    
\end{remark}

%%%%%%%%%%%%%%%%%%%%%%%%%%%%%%%%%%%%%%%%%%%%%%

\subsection{Main findings}

One can ask if decreasing a ship's own speed $|u|_h$ under fixed wind field $W$ will cause the same effect on the time-optimal path as increasing the wind force with $|u|_h=1$ and maintaining the same relation $\frac{|W|_h}{|u|_h}$. Such an approach is in line with the comment in a space-time picture \cite{herdeiro} which states that since $0<|u|<1$ the decrease of the ship's velocity introduces a larger effective wind $\tilde{W}^i>W^i$. From this point of view the formulae for $\tilde{\alpha}$ and $\tilde{\beta}$ are exactly the same as given in \cite{colleen_shen} for the case $|u|=1$, however with $W^i$ replaced by a rescaled wind $\tilde{W}^i=\frac{1}{|u(x)|_h}W^i$, where $W\neq 0$. Furthermore, there is a triality between the null geodesic flow in $n$-dimensional stationary metrics, the geodesics flow in $n-1$-dimensional Randers geometries and the orbits which satisfy the navigation problem with the vector fields in $n-1$-dimensions which do not depend on time \cite{herdeiro}. What brings the efficient benefit is the fact that a problem which seems to be difficult to be solved in one of the backgrounds may occur simpler in another.  

Geometrically, in the standard formulation of the problem in each tangent space $T_xM$ the unit sphere of the metric $\tilde{F}$ is the $W$-translate of the Riemannian unit sphere. Now, in contrast to Proposition 1.1 in \cite{colleen_shen} the absence of a wind $W$ does not guarantee that the solutions to the problem are given by the background geodesics of $M$. A relevant example is presented further in Section 3. Thus, this implies that following the connections of greater $h$-length on $M$ can result in shorter travel time even when $W=0$, so not only in the presence of a perturbation. This differs from the solution to the original problem. The $h$-indicatrix is modified due to appearing the new variable $|u(x)|_h$. Thus, this fact leads to  
\begin{corollary}
The condition $W=0$ is not sufficient for the background $h$-geodesics to be the solutions to the generalized Zermelo navigation problem on $(M, h)$.  %The solutions to the generalized Zermelo navigation problem under $W=0$ are not necessarily represented by the background $h$-geodesics. JESLI $h(u(x), u(x))\neq const.$ wczesniej to na pewno nie są h-geodesics (o ile intuicję potwierdzimy dowodem).
\end{corollary}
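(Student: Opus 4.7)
The plan is to prove the corollary by specializing the Randers metric $\tilde{F}$ derived in \eqref{ran_mira} to the case $W=0$ and observing that the resulting Finsler structure is not isometric to $h$ as soon as $|u(x)|_h$ varies with $x$. Since only sufficiency is denied, it is enough to exhibit the structural obstruction together with a concrete witness manifold; the detailed example is postponed to Section 3.

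First I would substitute $W \equiv 0$ into \eqref{ran_mira}, \eqref{alpha_mira} and \eqref{beta_mira}. The $1$-form $\tilde{\beta}$ vanishes because $\tilde{b}_i = -W_i/\tilde{\lambda} = 0$, and $\tilde{\lambda}$ collapses to $|u|_h^2$, so that
\begin{equation*}
\tilde{F}(y) \;=\; \frac{\sqrt{|y|_h^2 \, |u|_h^2}}{|u|_h^2} \;=\; \frac{|y|_h}{|u(x)|_h}, \qquad \tilde{a}_{ij}(x) \;=\; \frac{h_{ij}(x)}{|u(x)|_h^2}.
\end{equation*}
Thus in the windless regime the modified metric reduces to a purely Riemannian one, conformally related to $h$ with conformal factor $\phi(x) = |u(x)|_h^{-2}$. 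By the preceding subsections, the time-optimal paths on $(M,h)$ with varying own speed and no wind are precisely the $\tilde{a}$-geodesics.

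The key step is then to recall a standard fact from Riemannian geometry: for a conformal change $\tilde{a} = \phi \, h$ with $\phi$ non-constant, the Levi-Civita connections, hence the geodesic sprays, genuinely differ; equivalently, $h$-geodesics solve the $\tilde{a}$-geodesic equation only when $d\phi \equiv 0$, i.e. when $|u(x)|_h$ is constant on $M$. So whenever $|u(\cdot)|_h$ is a truly non-constant admissible function, the background $h$-geodesics fail to be time-optimal, which already disproves sufficiency of $W=0$.

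The main (and only) obstacle is exhibiting a clean example that is visibly not a degenerate case, so the reader sees that the phenomenon is generic rather than an artefact of the Riemannian background. Here I would simply cite forward to the example of Section 3, where a non-constant $|u(x)|_h$ on a flat Riemannian piece of $M$ bends the time-optimal trajectories away from straight lines, demonstrating explicitly that $W=0$ does not force the minimizers to coincide with $h$-geodesics. Combining the reduction $\tilde{F}(y) = |y|_h/|u(x)|_h$, the conformal-geodesic obstruction, and the forthcoming example yields the corollary.
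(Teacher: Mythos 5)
Your proposal is correct and follows essentially the same route as the paper: the paper likewise specializes \eqref{ran_mira} to $W=0$ to obtain $\tilde{F}(y)=\frac{1}{|u(x)|_h}|y|_h=|y|_{\tilde a}$ with $\tilde a_{ij}=\frac{1}{|u(x)|_h^2}h_{ij}$ conformal to $h$, notes that $\tilde a\neq h$ once $|u(x)|_h$ is non-constant so the solutions are $\tilde a$-geodesics rather than $h$-geodesics, and points to the explicit example of Section 3 (with $|U(x,y)|=\cos y$ on the Euclidean plane) as the witness. Your added appeal to the standard fact that a non-constant conformal factor changes the geodesic family only makes explicit a step the paper leaves implicit.
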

From \eqref{ran_mira} with $W=0$ we get $\tilde{F}(y)=\frac{1}{|u(x)|_h}|y|_h=|y|_{\tilde{a}}$. Therefore, the solutions are given by $\tilde{a}$-geodesics. In particular, assuming additionally that $h(u(x), u(x))=const.$ we obtain the following corollary. 
%CZY TO JEST PRAWDĄ ?
\begin{corollary}
The solutions to the generalized Zermelo navigation problem on $(M, h)$ are $h$-geodesics up to normalization (scaling) if and only if $W=0$ and $h(u, u)=const.$ %IMPLIKACJA W SLABSZEJ WERSJI If $W=0$ and $h(u, u)=const.$ then the solutions to the generalized Zermelo navigation problem on $(M, h)$ are  $h$-geodesics up to normalization (scaling). 
%(jesli to prawda, to byc moze jest tu nawet rownowaznosc?)}.WYDAJE MIE SIE, ZE JEDNA NIE MA ROWNOWAZNOSCI, GDYZ CHYBA MOZNA ZNALEZC NIESTAŁA FUNKCJE $|u(x)|$ TAKA, ZE SLAD GEODEZYJNEJ BEDZIE SIE POKRYWAL Z h-GEODEZYJNA. TYLKO CZY TO BEDZIE h-GEODEZYJNA JESZCZE? % %{\color{red}{normalization / CZY parametrization ?} CZY MOZNA MOWIC TUTAJ O NORMALIZACJI TAKŻE, GDY $h(u, u)$ NIE JEST STAŁA ? JESLI TAK, TO WOWCZAS OZNACZALOBY TO TO SAMO CO GEODEZYJNE W METRYCE "CONFORMAL TO h" ? } %PYTANIE: ALE NORMALIZOWAC MOZNA BY TEZ ZMIENNA FUNKCJE, NIEKONIECZNIE STAŁĄ CZYLI WÓWCZAS MOZNA BY TAK POWIEDZIEC O KAZDEJ $|u(x)|$, ZE NORMALIZUJEMY-WTEDY OZNACZLAOBY TO TO SAMO CO GEODEZYJNE W CONFORMAL METRIC ? ODP. NO NIE, GDYZ NORMALIZACJA POLEGA NA PODZIELENIU WEKTORA (TU STYCZNEGO DO GEODEZYJNEJ KTORY WYZNACZA GEODEZYJNA) PRZEZ JEGO DLUGOSC. A WE WZORZE NA METRYKE DZIELIMY PRZEZ DLUGOSC WEKTROA PREDKOSCI WLASNEJ  ANIE PRZEZ DLUGOSC WEKTROA PREDKOSCI WYPADKOWEJ CZYLI TO NIE JEST NORMALIZACJA. XXX THINK -  (RW: z dokladnoscia do normalizacji, tzn. all geodezyjne ktore nie tylko maja ten sam slad, ale sa tego samego typu (np. all sa prostymi w $R^2$), a roznia sie parametryzacja (inny czas przejscia czy predkosc wypadkowa po tej samej geodezyjnej w sensie sladu) wowczas zlewaja sie w jedno, ze tylko kwestia skalowania (normalziacji do wektora predkosci jednostkowego) je rozni.  CZY O TAKICH ROZW (SYTUACJI) MOZNA POWIEDZIEC, ZE ROZW. SA h-geodezyjne, A NIE TYLKO ZE GEODEZYJNE METRYKI RIEMANNA, KTORA JEST CONFORMAL DO h (bo jest, ale mozna tutaj ostrzej powiedziec?)
\end{corollary}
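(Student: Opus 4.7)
For the sufficiency direction $(\Leftarrow)$, I would simply substitute $W=0$ and $|u|_h=c$ constant into the formula \eqref{ran_mira}. The Randers metric collapses to $\tilde F(y)=|y|_h/c$, a fixed positive scalar multiple of $\sqrt h$. Consequently $\tilde F$-geodesics are $h$-geodesics with parameter rescaled by the constant $1/c$, which is precisely the single global reparametrization that the phrase ``up to normalization (scaling)'' is meant to permit.

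For the necessity direction $(\Rightarrow)$, I would first translate the hypothesis into the analytic statement that $\tilde F(y)=c(x)^{-1}|y|_h$ for some positive function $c$ (equivalently, $\tilde a$ is pointwise a scalar multiple of $h$ and $\tilde\beta=0$). Substituting this identity into \eqref{ran_mira} yields
\begin{equation*}
\sqrt{[h(W,y)]^2+|y|_h^2\tilde\lambda}-h(W,y)=\frac{\tilde\lambda}{c(x)}\,|y|_h.
\end{equation*}
Moving $h(W,y)$ to the right and squaring cancels the $h(W,y)^2$ terms and leaves
\begin{equation*}
h(W,y)=\frac{c(x)^2-\tilde\lambda}{2c(x)}\,|y|_h.
\end{equation*}
The decisive step is a parity check: the right-hand side is even in $y$, whereas the left-hand side is linear and therefore odd. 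Replacing $y$ by $-y$ and adding the two identities forces $h(W,y)=0$ for every $y\in T_xM$, i.e.\ $W\equiv 0$ on $M$. Reinserting $W=0$ into the same identity gives $c(x)^2=|u(x)|_h^2$. Finally, with $W=0$ the discussion preceding the corollary identifies solutions with $\tilde a$-geodesics for $\tilde a_{ij}=h_{ij}/|u|_h^2$, and the classical rigidity that two conformally related Riemannian metrics share their unparametrized geodesics if and only if the conformal factor is constant then forces $|u|_h$ to be independent of position.

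The main obstacle, in my view, is the very first step of the reverse direction: pinning down what ``$h$-geodesics up to normalization (scaling)'' is supposed to mean in this generalized setting. A bare requirement that the unparametrized solution curves coincide with $h$-geodesic curves is too weak in general---the classical Zermelo problem in $\mathbb{R}^2$ with constant nonzero wind and $|u|_h=1$ already has straight-line optima that are Euclidean geodesics---so the intended reading must be that $\tilde F$ itself reduces to a constant positive multiple of $\sqrt h$. Once that interpretation is fixed, the parity argument under $y\mapsto -y$ is what does the real work for $W=0$, and the remaining constancy of $|u|_h$ is then a routine conformal-rigidity consequence.
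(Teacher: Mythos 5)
Your proof is correct. For sufficiency it coincides with the paper's own one-line observation that \eqref{ran_mira} with $W=0$ collapses to $\tilde F(y)=|y|_h/|u(x)|_h$, which is a constant multiple of $\sqrt{h_{ij}y^iy^j}$ exactly when $h(u,u)$ is constant. For necessity the paper gives no explicit argument: it leans on the decomposition $\tilde F=\tilde\alpha+\tilde\beta$ of Section 2.2 and the surrounding assertion that $\tilde F$ is Riemannian iff $W=0$ (since $\tilde b_i=-W_i/\tilde\lambda$), leaving the details implicit. Your parity argument --- isolate the square root, square, divide by $|y|_h$, and compare the identity at $y$ and $-y$ --- is an explicit, self-contained derivation of that same fact (a Randers metric is Riemannian iff its $1$-form vanishes), so the two routes rest on the same idea, but yours is actually carried out and simultaneously pins down $c(x)=|u(x)|_h$. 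Your final appeal to projective rigidity of conformally related metrics is an ingredient the paper does not use: the paper simply reads ``up to normalization (scaling)'' as ``$\tilde F$ is a constant multiple of $\sqrt h$,'' for which constancy of $|u|_h$ is immediate, whereas your argument yields the stronger statement that, once $W=0$ is known, even mere coincidence of the unparametrized solution curves with $h$-geodesic curves forces $h(u,u)=\mathrm{const}$ (in dimension at least two). Your preliminary remark that the weakest reading of the hypothesis makes necessity false --- constant wind on $\mathbb{R}^2$ with $|u|_h=1$ already yields straight-line optima although $W\neq 0$ --- is a genuine clarification of what the corollary must mean, and the paper does not address it.
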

\noindent
%Such a case occurs, in particular, in the original Zermelo navigation on Riemannian manifolds which now becomes the special case of the generalized problem. 
It implies that the resulting Riemannian geodesics trace the same curves, {%, i.e. the corresponding background $h$-geodesics}, 
however speeds differ. Let us recall that in the original Zermelo navigation in the case when $W$ vanishes $a=h$, where $\alpha^2(y)=a_{ij}y^iy^j$. Then the time-optimal solutions are represented by $h$-geodesics, that is $F(y):=\sqrt{h_{ij}y^iy^j}$. Let $|u(x)|_h$ be a smooth function of $x$ and by assumption $|u(x)|_h$ is positive. Now, in the absence of a wind the resulting Riemannian metric is conformal to the background metric $h$, $\tilde{a}\neq h$ for $h(u, u)\neq 1$. $\tilde{F}$ is Riemannian with $\tilde{a}_{ij}=\frac{1}{|u(x)|^2}h_{ij}$ and $\frac{1}{|u(x)|^2}$ plays the role of a conformal factor.  %Let us observe that if, additionally, $h(u(x), u(x))=const.$ then the solutions are $h$-geodesics {\color{red}{up to normalization [rw]}}. 
The function $\tilde{F}$ is positive on the manifold $T M \setminus 0$, whose points are of the form $(x, y)$, with $0 \neq y \in T_x M$. Over each point $(x, y)$ of $T M \setminus 0 $ we designate the vector space $T_x M$ as a fiber, and name the resulting vector bundle $\pi^{\ast} T M$. A Finsler metric $\tilde{F}$ is said to be strongly convex if a canonical symmetric bilinear form $g_{ij} dx^i \otimes dx^j$ on the fibers of $\pi^{\ast} TM$ , with $g_{ij} = \frac 1 2 \left(\tilde{F}^2 \right)_{y^i y^j}$ is positive definite, in which case it defines an inner product on each fiber of $\pi^{\ast} T M$. The perturbation of Riemannian metrics $h$ by vector fields $W$ and the spatial function $|u(x)|_h$, with $|W|_h  < |u|_h\leq 1$, generates strongly convex Randers metrics. We proceed following the considerations in the proof of two implications in the case of $|u|=1=const.$ and adopting the theoretical assumptions mentioned therein to the new formulation of the problem with a variable in space $|u(x)|_h$. An inverse problem asks if every strongly convex Randers metric can be realized through the perturbation of some Riemannian metric $h$ by some vector field $W$ satisfying $h(W, W) < h(u, u)$ and taking into account $|u(x)|_h$. By this and making use of the above corollaries we thus get that $\tilde{F}$ is Riemannian, namely conformal to $h$ if and only if $W = 0$. It can be checked analogous to \cite{colleen_shen} by constructing $h$ and rescaled wind $\tilde{W}$ that perturbing the above $h$ by the stipulated $\tilde{W}$ gives back the Randers metric we started with. Summarizing, we obtain 
 %\textbf{Proposition 1.1} \textit{A strongly convex Finsler metric $F$ is of Randers 
%type if and only if it solves the Zermelo navigation problem on a
%Riemannian manifold $(M, h)$, under the influence of a wind $W$ which
%satisfies $h(W, W ) < 1$. Also, $F$ is Riemannian if and only if $W = 0$.} \\
\begin{proposition}
A strongly convex Finsler metric $\tilde{F}$ is of Randers type if and only of it solves the generalized Zermelo navigation problem on a Riemannian manifold $(M, h)$, with a variable in space ship's own speed $|u(x)|_h\leq1$ and under the influence of a wind $W(x)$ which satisfies \linebreak $|W(x)|_h<|u(x)|_h $.  
Also, $\tilde{F}$ is Riemannian (conformal to $h$) if and only if $W=0$. In particular, $\tilde{F}$ is background Riemannian up to normalization (scaling) if and only if $W=0$ and $h(u, u)=const.$ %The solutions to the generalized Zermelo navigation problem on $(M, h)$ are $h$-geodesics up to normalization (scaling) if and only if $W=0$ and $h(u, u)=const.$%{\color{red}EWENT. DODAĆ:  Moreover, $\tilde{F}$ is background Riemannian up to normalization (scaling) if and only if $W=0$ and $h(u, u)=const.$ }
\label{primero}
\end{proposition}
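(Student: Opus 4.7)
The plan is to split the biconditional into a direct and an inverse implication, and then to obtain the two ``$W=0$'' refinements directly from formula~\eqref{ran_mira}. For the direct implication, most of the work has already been carried out in Section~2.1: starting from $|u|_h^2=h(v-W,v-W)$ and solving the resulting quadratic in $|v|_h$ yielded~\eqref{ran_mira}, which by inspection is of the form $\tilde{\alpha}+\tilde{\beta}$ with $\tilde{a}_{ij}$ and $\tilde{b}_i$ given by~\eqref{alpha_mira} and~\eqref{beta_mira}. It therefore remains to verify strong convexity. I would invoke the standard Finslerian criterion that a Randers metric $\tilde{\alpha}+\tilde{\beta}$ is strongly convex if and only if $\tilde{a}^{ij}\tilde{b}_i\tilde{b}_j<1$. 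A short computation, using the Sherman--Morrison form of $\tilde{a}^{ij}$, gives $\tilde{a}^{ij}\tilde{b}_i\tilde{b}_j=|W|_h^2/|u|_h^2$, which is strictly less than $1$ precisely because of the standing hypothesis $|W|_h<|u|_h$.

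For the inverse implication I would mimic the Bao--Robles--Shen construction from~\cite{colleen_shen} and absorb the extra degree of freedom introduced by $|u(x)|_h$ via Herdeiro's rescaling $\tilde{W}^i:=W^i/|u(x)|_h$. Given any strongly convex Randers metric $\tilde{F}=\sqrt{\tilde{a}_{ij}y^iy^j}+\tilde{b}_iy^i$, I would fix an arbitrary positive smooth function $|u(x)|_h\leq 1$, apply the classical inversion formulas of~\cite{colleen_shen} to recover $h_{ij}$ and the rescaled wind $\tilde{W}^i$ from $(\tilde{a}_{ij},\tilde{b}_i)$, and then set $W^i:=|u(x)|_h\,\tilde{W}^i$. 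The constraint $\tilde{a}^{ij}\tilde{b}_i\tilde{b}_j<1$ translates, after rescaling, into $|W|_h<|u|_h$, so the reconstructed triple $(h,W,|u|_h)$ satisfies the hypotheses of the generalized problem; a direct substitution into~\eqref{ran_mira} then shows that perturbing this $h$ by this $W$ returns the original $\tilde{F}$, parallel to the calculation in~\cite{colleen_shen}. The conceptual obstacle here is that the inverse problem carries only the two Randers data $(\tilde{a},\tilde{b})$ but three geometric unknowns $(h,W,|u|_h)$, so the reconstruction is genuinely a one-parameter family, and the rescaling device is exactly what selects a canonical representative for each choice of $|u|_h$.

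The remaining statements follow directly from~\eqref{ran_mira} together with the preceding corollaries. Setting $W=0$ in~\eqref{ran_mira} yields $\tilde{F}(y)=|y|_h/|u(x)|_h$, so $\tilde{a}_{ij}=h_{ij}/|u(x)|_h^2$ and $\tilde{F}$ is Riemannian and conformal to $h$; conversely, if $\tilde{F}$ is Riemannian then $\tilde{b}_i\equiv 0$, and~\eqref{beta_mira} forces $W_i\equiv 0$. The final refinement is then immediate: the conformal factor $1/|u(x)|_h^2$ is constant if and only if $h(u,u)$ is constant, in which case $\tilde{F}=c\sqrt{h_{ij}y^iy^j}$ for some positive constant $c$, which is precisely the ``$h$-geodesics up to normalization'' statement.
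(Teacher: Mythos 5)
Your overall route is the same as the paper's: the forward direction is read off from the derivation of \eqref{ran_mira} in Section~2.1, the inverse direction is handled by adapting the Bao--Robles--Shen construction through a rescaled wind, and the two $W=0$ refinements follow from \eqref{alpha_mira}--\eqref{beta_mira} and the preceding corollaries. Your explicit verification of strong convexity is a welcome addition that the paper leaves implicit, and the computation is correct: with $\tilde a^{ij}=\tilde\lambda\bigl(h^{ij}-W^iW^j/|u|_h^2\bigr)$ one indeed gets $\tilde a^{ij}\tilde b_i\tilde b_j=|W|_h^2/|u|_h^2<1$ exactly under $|W|_h<|u|_h$.

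However, your inverse construction contains a concrete scaling error. Suppose the classical inversion of \cite{colleen_shen} applied to $(\tilde a_{ij},\tilde b_i)$ returns a metric $h_0$ and a wind $V_0$ with $|V_0|_{h_0}<1$. If you keep $h:=h_0$ and set $W:=|u|_h V_0$, then the generalized navigation data $(h_0,\,|u|_hV_0,\,|u|_h)$ gives $\tilde\lambda=|u|_h^2\bigl(1-|V_0|_{h_0}^2\bigr)$ and hence, via \eqref{alpha_mira}--\eqref{beta_mira}, the Randers data $\bigl(a_{0,ij}/|u|_h^2,\;b_{0,i}/|u|_h\bigr)$, i.e.\ the metric $\tilde F/|u|_h$ rather than $\tilde F$. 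The same stray factor sits in the identification you invoke for the forward direction: substituting $\tilde W^i=W^i/|u|_h$ into the classical formulas produces $|u|_h^2\tilde a_{ij}$ and $|u|_h\tilde b_i$, not $\tilde a_{ij}$ and $\tilde b_i$. The clean way to organize the reduction is conformal: the defining condition $|v-W|_h=|u|_h$ is exactly $|v-W|_{\hat h}=1$ for $\hat h:=h/|u|_h^2$, so the generalized problem with data $(h,W,|u|_h)$ coincides with the classical problem with data $(\hat h,W)$, where $|W|_{\hat h}=|W|_h/|u|_h<1$. Accordingly, for a prescribed positive function $|u(x)|_h$ the correct reconstruction is $h:=|u|_h^2\,h_0$ with $W:=V_0$ unchanged as a contravariant field; a direct substitution then returns $(\tilde a,\tilde b)$ on the nose. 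With that replacement your argument, including the correct observation that the inverse problem is a one-parameter family indexed by the choice of $|u|_h$, goes through.
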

\noindent
Clearly, with the presence of a wind $W$ the Riemannian metric $h$ no longer gives the transit time along vectors. Furthermore, a glance at the new metric \eqref{ran_mira} leads to 
\begin{lemma}
\label{lemacik}
A transit time of a solution to the generalized Zermelo navigation problem on Riemannian manifolds, with arbitrary navigation data $(h, W)$ and any speed different from a constant unit speed, i.e. $|u(x)|_h\neq 1$, %{XXX  with any variable in space speed $|u(x)|_h$ or constant speed with $h(u, u)<1$) XXX} 
is greater than a transit time of the corresponding solution to the original Zermelo navigation problem. %, i.e. with $|u|_h=1=const.$ 
\end{lemma}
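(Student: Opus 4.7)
The plan is to establish a pointwise inequality between the two Randers metrics and then propagate it to the integrated transit times. First I would rationalize the numerator of \eqref{ran_mira}, writing
\[
\tilde F(y)=\frac{|y|_h^2}{\sqrt{[h(W,y)]^2+|y|_h^2\,\tilde\lambda}+h(W,y)},\qquad \tilde\lambda=|u(x)|_h^2-|W(x)|_h^2,
\]
and noting that the Randers metric $F$ of the original Zermelo problem from \cite{colleen_shen} has the identical form with $\tilde\lambda$ replaced by $\lambda=1-|W(x)|_h^2$. The denominator is strictly positive on $TM\setminus 0$ since $|h(W,y)|\leq\sqrt{[h(W,y)]^2+|y|_h^2\tilde\lambda}$. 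This rewriting puts the whole dependence on $|u|_h$ into a single parameter $\tilde\lambda$ appearing monotonically under a square root, which is the feature one exploits next.

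Step one is the pointwise comparison: because $|u(x)|_h\leq 1$ one has $\tilde\lambda(x)\leq\lambda(x)$ with equality precisely when $|u(x)|_h=1$, so the denominator above is no larger than that of $F$, giving $\tilde F(x,y)\geq F(x,y)$ for every $y\in T_xM$, with strict inequality exactly where $|u(x)|_h<1$. Step two compares the minima: if $\tilde\gamma$ is a $\tilde F$-minimizer between the prescribed endpoints with transit time $\tilde T=\int\tilde F(\dot{\tilde\gamma})\,dt$ and $\gamma$ is the corresponding $F$-minimizer with time $T=\int F(\dot\gamma)\,dt$, then
\[
\tilde T=\int\tilde F(\dot{\tilde\gamma})\,dt\;\geq\;\int F(\dot{\tilde\gamma})\,dt\;\geq\;T,
\]
where the first inequality uses step one and the second uses $F$-minimality of $\gamma$. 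Step three upgrades this to strict inequality by invoking continuity of $|u|_h$: since the hypothesis $|u(x)|_h\neq 1$ forbids $|u|_h\equiv 1$ along $\tilde\gamma$, there is an open subarc on which $|u(x)|_h<1$, and on it the inequality in step one is strict; integrating yields $\tilde T>T$.

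The main obstacle I foresee is precisely step three, since the chain of inequalities in step two only gives weak dominance on its own. Turning this into a strict gap between transit times requires a regularity/measure argument to certify that the generalized optimal path actually spends positive parameter-measure in a region where $|u(x)|_h<1$; under the natural reading of \textquotedblleft$|u(x)|_h\neq 1$\textquotedblright{} as not identically unit along the minimizer, this follows from continuity of $|u|_h$ together with the standard non-triviality of $\tilde\gamma$. Apart from this mild technical point, the proof reduces to the algebraic monotonicity of the rationalized denominator in the single parameter $\tilde\lambda$ and to the universal comparison of length-functionals evaluated on competing minimizers.
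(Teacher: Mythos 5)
Your proof follows essentially the same route as the paper's: a pointwise comparison $\tilde F\geq F$ driven by $\tilde\lambda\leq\lambda$, combined with the minimizing property of the $F$-geodesic to get $\mathcal{L}_{\tilde F}(\tilde\gamma)\geq\mathcal{L}_{F}(\tilde\gamma)\geq\mathcal{L}_{F}(\gamma)$. In fact you go slightly further than the paper in two respects: your rationalized form $\tilde F(y)=|y|_h^2\big/\bigl(\sqrt{[h(W,y)]^2+|y|_h^2\tilde\lambda}+h(W,y)\bigr)$ actually proves the monotonicity in $\tilde\lambda$ that the paper merely asserts, and your step three addresses the strictness of the inequality, whereas the paper's own proof stops at the weak inequality $\mathcal{L}_{\tilde F}(\tilde\gamma)\geq\mathcal{L}_{F}(\gamma)$ even though the lemma claims a strictly greater transit time.
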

\begin{proof}
For any piecewise $C^{\infty}$ curve $\ell$ in $M$, the $\tilde{F}$- length of $\ell$ denoted by $\mathcal{L}_{\tilde{F}}(\ell)$ is equal to the time for which the object travels along it, i.e. $T= \int\limits_{0}^{T}\tilde{F}(\dot{\ell}(t))dt = \mathcal{L}_{\tilde{F}}(\ell).$ Let $\gamma$, $\tilde{\gamma}$ be $F$- and $\tilde{F}$-geodesic, respectively, where $F(y) = \frac{1}{\lambda}(\sqrt{ \left[ h(W,y) \right]^2 + |y|_h^2\lambda} - h(W,y))$ with $\lambda=1-|W|_h^2$ and $\tilde{F}(y) = \frac{1}{\tilde{\lambda}}( \sqrt{ \left[ h(W,y) \right]^2 + |y|_h^2\tilde{\lambda}}- h(W,y))$ with \  $\tilde{\lambda}=|u|_h^2-|W|_h^2$. 
%Czy jest możliwe, czy istnieje taki przypadek (dobrać $|u(x)|$ i $W(x)$), aby opłacało się zwolnić gdzieś po drodze, aby być szybciej u celu niż grzac stale z max predkoscia wlasną (ZNP) ? 
For any nonzero $ y \in T_x M$ $F(y),\tilde{F}(y)>0$. Let us fix navigation data $(h, W)$, where $|W|_h<|u|_h\leq1$. By this, we have $\lambda\geq\tilde{\lambda}>0$. The equality of lengths $\mathcal{L}_{\tilde{F}}$ and $\mathcal{L}_{F}$ holds iff $|u|_h=1$, then $\tilde{F}(y):=F(y)$. Otherwise, $\tilde{F}(y) > F(y)$ for any scenario obtained from the triples $(h, W, |u(x)|_h)$ $\tilde{F}$ is a decreasing function with the minimum at $|u|_h=1$. %Let us consider the case when $|u(x)|<1$ (czy $|u(x)|\leq1$ ? (...) %bo czasami moze byc 1, wazne ze nie caly czas, nie $\forall x$ tzn. $\exists x$ such that $|u(x)|<1$ choc na chwile. Jesli zmienne $|u|<1$ caly czas, to claro. Ale jesli $|u|<1$ czasami a czasem $|u|=1$, to taki przypadek dowod tu obejmuje, bo nie mozna powiedziec wtedy, ze $|u(x)|<1$ (domyslnie $\forall x$), czyz nie? 
Note that $\mathcal{L}_{\tilde{F}}(\tilde{\gamma})\geq\mathcal{L}_{F}(\tilde{\gamma})$ and $\mathcal{L}_{F}(\tilde{\gamma})\geq\mathcal{L}_{F}(\gamma)$ as the geodesic minimizes the length.  From transitivity we are thus led to the inequality $\mathcal{L}_{\tilde{F}}(\tilde{\gamma})\geq\mathcal{L}_{F}(\gamma)$.
%\qquad \qquad \qquad \qquad \qquad \qquad \qquad \qquad \qquad \qquad$\vdots$ 
%\begin{flushright}
%$\Box$
%\end{flushright}
\end{proof}

\ 

\noindent
Having introduced one more variable the bijection between Randers spaces and the triples $(h, W, |u|_h)$ or equivalently the pairs $(h,\tilde{W})$ is established. Note also that in Riemannian geometry two geodesics which pass through a common point in opposite directions necessarily trace the same curve. All reversible Finsler metrics have this property. However, this does not extend to nonreversible settings \cite{CR}. Thus, the differences in $\tilde{F}$-lengths indicate the differences in transit time on the Riemannian sea $(M, h)$.  

%From \cite{colleen_shen} we know that in Riemann-Finsler geometry Randers metrics may be identified with the solutions to the navigation problem on Riemannian manifolds. This navigation structure establishes a bijection between Randers spaces and pairs $(h,W)$ of Riemannian metrics $h$ and vector fields $W$ on the manifold $M$. 

%%%%%%%%%%%%%%%%%%%%%%%%%%%%%%%%%%%%

\section{Simulation with two-dimensional Randers metric} %and Euclidean background

Let us assume that the initial Riemannian metric $h_{ij}$ to be perturbed is the standard Euclidean metric $\delta_{ij}$ on $\mathbb{R}^2$. In the scenario in which the new function $|u(x)|_h$ is taken into consideration the Randers metric $\eqref{ran_mira}$ coming from the navigaton data with the background Euclidean metric is expressed as follows 
\begin{equation}
\tilde{F}(x, y) = \frac{ \sqrt{( \delta_{ij}W^iy^j)^2 + |y|^2 (|u|^2-|W|^2)} } {|u|^2-|W|^2} - \frac{ \delta_{ij}W^iy^j} {|u|^2-|W|^2}.
\label{euklid}
\end{equation}
Since our focus is on dimension two, we denote the position coordinates $(x^1, x^2)$  by $(x, y)$, and expand arbitrary tangent vectors $y^1\frac{\partial}{\partial x^1}+y^2\frac{\partial}{\partial x^2}$ at  $(x^1, x^2)$ as $(x,y;u,v)$. We also express a ship's own speed $|u|_h$ as $|U|$. The formula \eqref{euklid} in two dimensional case after adopting the above notations, with an arbitrary mild vector field $W(x, y)=(W^1(x, y), W^2(x, y))$ yields
\begin{equation}
  \label{W1W2}
%\tilde{F}(x,y; u,v) = \frac{\sqrt{|U|^2(u^2+v^2)-(uW^2-vW^1)^2}-W^1u-W^2v}{|U|^2-|W|^2}
\tilde{F}(x,y; u,v) = \frac{\sqrt{(u^2+v^2)|U(x, y)|^2-(uW^2(x, y)-vW^1(x, y))^2}-uW^1(x, y)-vW^2(x, y)}{|U(x, y)|^2-|W(x, y)|^2}.
\end{equation}
%Thus, adopting the notations in two dimensional case yields - STD ZNP
%\begin{equation}
 %\label{W1W2}
%F(x, y; u,v) = \frac{\sqrt{u^2h_{11}+v^2h_{22}-(uW^2-vW^1)^2h_{11}h_{22}}-uW^1h_{11}-vW^2h_{22}}{1-|W|^2}
% \end{equation}
%where $W^i=W^i(x, y)$.
Now, we apply the current $W$ determined by, for instance, the quartic plane curve given by 
\begin{equation}
W(x,y)=\hat{a} \left(\hat{b}-y^2\right)^2\frac{\partial}{\partial x} \qquad \text{with} \qquad  |W(x, y)|=|\hat{a}| \left(\hat{b}-y^2\right)^2 <|U(x, y)|\leq1.
\label{wind}
\end{equation}
\begin{figure}
    \centering
~\includegraphics[width=0.3\textwidth]{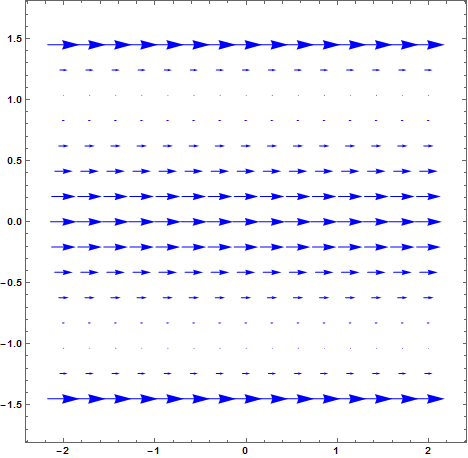}\qquad
~\includegraphics[width=0.13\textwidth]{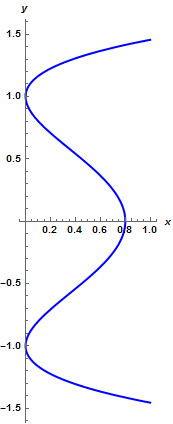}
%~\includegraphics[width=0.42\textwidth]{img/rb_pole5}
~\includegraphics[width=0.37\textwidth]{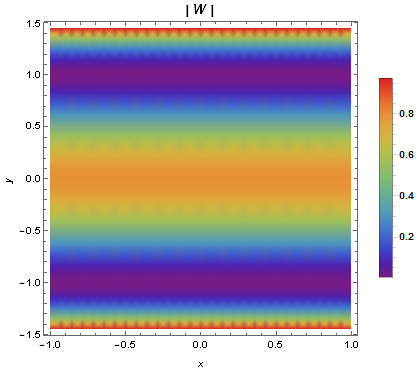}
\caption{The perturbation $W$ determined by the plane quartic curve \eqref{wind} with $\hat{a}=0.8$, $\hat{b}=1$.}
\label{rb_pole}
\end{figure}
%\begin{figure}
   %     \centering
%~\includegraphics[width=0.35\textwidth]{img/rb_pole1}\qquad
%~\includegraphics[width=0.15\textwidth]{img/rb_pole4}
%~\includegraphics[width=0.42\textwidth]{img/rb_pole5_popr}
   %     \caption{Perturbation determined by the plane quartic curve for $a=0.8$, $b=1$.}
%\label{rb_pole}
%\end{figure}

\noindent
The current may have a rotational effect as well as a translational effect on a navigating ship and this effect depends on a ship's heading. Let $\hat{a}=0.8$, $\hat{b}=1$ and adapt the scenario to the horizontal flow of the perturbation. This is shown in Figure \ref{rb_pole}. Recall that to apply the above proposition the strong convexity condition must be fulfilled. Let the function determining the ship's own speed be $|U(x, y)|=\cos y$. In the Cartesian coordinate system which we apply the condition implies the domain  $|y|<y_0\approx 1.27$.  The graphs of the norms $|W(x, y)|$ and $|U(x, y)|$ satisfying $|W(x, y)|<|U(x, y)|\leq 1$ are compared in Figure \ref{u_vs_W}. The difference $|U(x, y)|-|W(x, y)|$ confirming required convexity is presented in blue dashed.  
For the perturbation \eqref{wind} the form of the resulting metric yields  
\begin{equation}
\tilde{F}(x,y,u,v)=\frac{\sqrt{\left(u^2+v^2\right) \cos ^2y-0.64 v^2 \left(y^2-1\right)^4}-0.8 u \left(y^2-1\right)^2}{\cos ^2y-0.64 \left(y^2-1\right)^4}.
\label{F_rb}
\end{equation}
\begin{figure}
        \centering
~\includegraphics[width=0.4\textwidth]{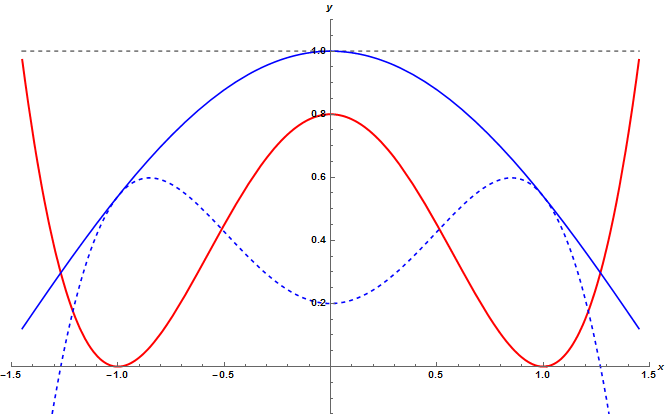}
        \caption{Comparing the norms $|W(x, y)|=0.8 \left(1-x^2\right)^2$ (red) and $|U(x,y)|=\cos x$ (blue) which satisfy $|W|<|u|\leq 1$ (in the example adapted to the horizontal flow); the difference $|U(x, y)|-|W(x, y)|$ is shown in blue dashed.}
\label{u_vs_W}
\end{figure}
\noindent
We use the partial derivatives of $\frac{1}{2}\tilde{F}^2$ to obtain the spray coefficients and then the final geodesic equations. Their solutions will determine the time-optimal paths. However, let us first recall that a spray on $M$ is a smooth vector field on $TM_0=TM\backslash \{0\}$ locally expressed in the standard form \cite{chern_shen} 
\begin{equation}
    \label{spray}
G=y^i\frac{\partial}{\partial x^i}-2G^i \frac{\partial}{\partial y^i},
\end{equation}
where $G^i=G^i(x, y)$ are the local functions on $TM_0$ satisfying $G^i(x, \hat{\lambda} y)=\hat{\lambda} ^2G^i(x, y)$, $\hat{\lambda}>0$. 
The spray is induced by $\tilde{F}$ and the spray coefficients $G^i$ of $G$ given by 
\begin{equation}
    \label{spray2}
G^i=\frac{1}{4}g^{il}\{ [\tilde{F}^2]_{x^ky^l}y^k-[\tilde{F}^2]_{}x^l\}%=\frac{1}{4}g^{il}\left(2\frac{\partial g_{jl}}{\partial x^k}-\frac{\partial g_{jk}}{\partial x^l}\right)y^jy^k,
\end{equation}
are the spray coefficients of $\tilde{F}$.
\noindent
We consider the spray coefficients for general 2D Finsler metric. Let $L(x,y;u,v):=\frac{1}{2}\tilde{F}^2(x,y;u,v)$. Hence \cite{chern_shen},
\begin{equation}
G^1=\frac{L_{vv}(L_{xu}u+L_{yu}v-L_x)-L_{uv}(L_{xv}u+L_{yv}v-L_y)}{2(L_{uu}L_{vv}-L_{uv}L_{uv})},
\end{equation}
\begin{equation}
G^2=\frac{-L_{uv}(L_{xu}u+L_{yu}v-L_x)+L_{uu}(L_{xv}u+L_{yv}v-L_y)}{2(L_{uu}L_{vv}-L_{uv}L_{uv})}.
\end{equation}
Next, we apply the last two formulae to compute the spray coefficients of \eqref{F_rb}. For a standard local coordinate system $(x^i, y^i)$ in $TM_0$ the geodesic equation for Finsler metric is expressed in the general form $\dot{y}^i+2G^i(x, y)=0$. %\qquad
%\ddot{x}^i+2G^i(x,\dot{x})=0
Hence, in the planar case the system of geodesic equations is reduced to
\begin{equation}
\label{geo}
\dot{x}+2G^1(x,y; \dot{x}, \dot{y})=0 \quad  \wedge \quad  \dot{y}+2G^2(x,y; \dot{x}, \dot{y})=0. 
\end{equation}
 
%Hence, 
%\begin{equation}
%\label{geo1}
%\ddot{x}^i+\frac{1}{2} g^{il} (2\frac{\partial g_{jl}}{\partial x^k} - \frac{\partial g_{jk}}{\partial x^l})\dot{x}^j\dot{x}^k=0.
%\end{equation}
\noindent
The solution curves in the problem are found by working out the geodesics of $\tilde{F}$ given by \eqref{F_rb}. Analyses involving Randers spaces are generally difficult and finding solutions to the geodesic equations is not straightforward \cite{brody, chern_shen}. Also here, even in two-dimensional scenario due to the complexity of the spray coefficients' forms obtained we do not include the final time-optimal paths' equations. We use Mathematica 10.4 from Wolfram Research to generate the graphs and provide some numerical computations when the complete symbolic ones cannot be obtained. The numerical schemes can give useful information when studying the geometric properties of the solutions obtained as shown in the attached figures. 

In order to solve the final system of Randers geodesics in the example the form of the initial conditions including the optimal control $\varphi(t)$  become: for the point $x(0)=x_0\in \mathbb{R}$, $y(0)=y_0$, where we restrict $|y_0|\leq 1.25$, and for the tangent vector 
\begin{equation}
\label{ic_znp}
\dot{x}(0)=W^1(x_0, y_0)+|U(x_0, y_0)|\cos\varphi_0, \quad \dot{y}(0)=W^2(x_0, y_0)+|U(x_0, y_0)|\sin\varphi_0. 
\end{equation}
$\varphi(0)=\varphi_0\in [0, 2\pi)$ is the initial time-optimal angle measured counterclockwise which the relative velocity forms with $x$-axis. The dot indicates derivative with respect to $t$. The relations \eqref{ic_znp} can be derived by direct consideration of the equations of the planar motion including the representation of the vector components of $U$ and $W$. The results are presented in the figures below. The Randers geodesics in the case of the original Zermelo navigation (red) starting from a point in midstream, under the quartic curve perturbation \eqref{wind} are shown in Figure \ref{rb_7c_mira}. One can observe that there are the upstream focal points although there are none downstream. Much relevant information, in particular the effect of the variable in space $|U(x,y)|$, can be obtained directly from comparisons of the corresponding flows. We proceed setting for the families of $\tilde{F}$-geodesics in the example the following initial conditions: $x(0)=y(0)=0$, as in the case of $F$, and $\dot{x}(0)=0.8(1-y_0^2)^2+\cos\varphi_0$, $\dot{y}(0)=\sin\varphi_0$. We thus compare the Randers geodesics with constant $|U|=1$ (red) to the corresponding Randers geodesics with variable $|U|=\cos y$ (black) in the generalized problem, under the same quartic curve perturbation \eqref{wind}. In the demonstrated graphs we set the increments $\bigtriangleup\varphi_0$ which equal to $\frac{\pi}{9}$ or $\frac{\pi}{18}$. We see the influence of the new non-unit term $|U|$ on the final solution and, therefore, on the flow of the time-optimal paths.  
%\begin{figure}
   %     \centering
%~\includegraphics[width=0.55\textwidth]{img/rb_7ccc}
%~\includegraphics[width=0.4\textwidth]{img/mira_5a}
%~\includegraphics[width=0.6\textwidth]{img/rb_7ccc}
      %  \caption{The Randers geodesics in the classic Zermelo navigation, starting from a point in midstream under quartic curve perturbation in the increments $\bigtriangleup\varphi_0=\frac{\pi}{18}$.}
%\label{rb_7c}
%\end{figure}
\begin{figure}[h]
        \centering
%~\includegraphics[width=0.54\textwidth]{img/mira_6b_popr3}
%~\includegraphics[width=0.43\textwidth]{img/mira_6_1}
~\includegraphics[width=0.5\textwidth]{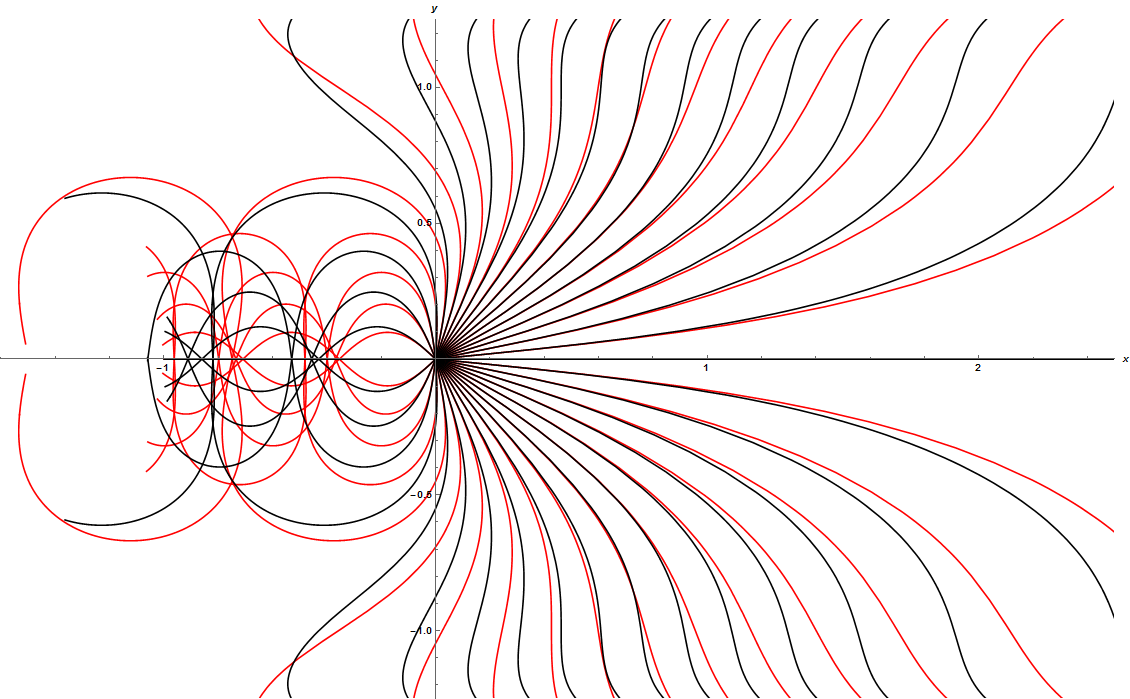}
~\includegraphics[width=0.45\textwidth]{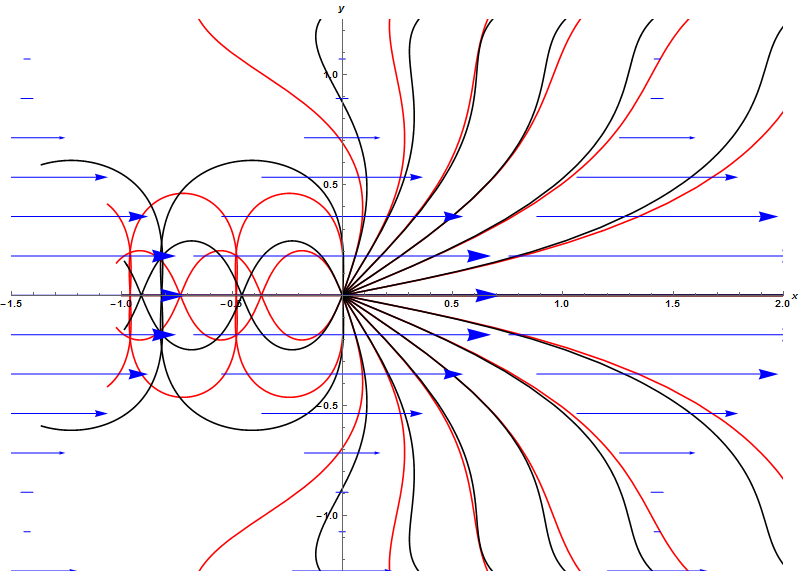}
%~\includegraphics[width=0.54\textwidth]{img/mira_6b_popr}
%~\includegraphics[width=0.54\textwidth]{img/mira_6b_popr2}
%~\includegraphics[width=0.4\textwidth]{img/mira_6_2}

        \caption{The Randers geodesics in the original Zermelo navigation, with $|U|=1$ (red) versus the geodesics in the generalized Zermelo navigation, with $|U(x, y)|=\cos y$ (black) starting from a point in midstream, under the same quartic curve perturbation \eqref{wind} marked by blue arrows, with the increments $\bigtriangleup\varphi_0=\frac{\pi}{18}$ (on the left) and $\bigtriangleup\varphi_0=\frac{\pi}{9}$ (on the right); $t=5$.} 
\label{rb_7c_mira}
\end{figure}[h]

Let us note that the presented example could also be investigated, for comparison, with the use of the original results (as the special case) by De Mira Fernandes \cite{mira} and partially by Zermelo \cite{zermelo, caratheodory} as we chose the planar Euclidean background. Moreover, their formulae for the time-optimal paths (headings) offer simpler computations. Nevertheless, we decided to show the Finslerian solution with the application of Randers metric as this approach enables to solve the analogous problems also in the non-Euclidean scenarios. 
%TO MOZNA TEZ OPISAC - %NA WYKRESIE PREDKOSCI WYPADKOWYCH (CZYLI LINIOWYCH) WIDAC, ZE SA ODCINKI CZASOWE GDY PRZY MNIJESZEJ PREDKOSCI WLASNEJ PRDKOSC WYPADKOWA JEST WIEKSZA NIZ GDY ZNP (STW=1=COSNT.). CZASEM TAK SIE MOZE ZDARZYĆ Z POWODU WPLYWU WIATRU. JEDNAK DROGI PRZEBYTE ODPOWIADAJCE TYM SAMYM CZASOM SA MNIEJSZE W ZMP (NIE MAM JESZCZE WYKRESU CALKI Z V WYPADKOWEJ JAKO FUNKCJI CZASU) LUB INDYKATRYSY DLA NP. T=1. DUZO LEPSZA BYLABY INDYKATRYSA GDYZ POKAZUJE PRZEBYTA DROGE NA KAZDYM KIERUNKU, A CALKA Z V(T) TYLKO DLA KONKRETNEGO INDIVIDUALA CZYLI JEDNEGO KATA POCZATKOWEGO. MAMY JUZ.  
%WYKRES
%\begin{figure}[h]
   %     \centering
%~\includegraphics[width=0.4\textwidth]{img/mira_RB_ZNP_compare_speeds_2}
%~\includegraphics[width=0.4\textwidth]{img/mira_RB_ZNP_W0_compare_speeds}
%~\includegraphics[width=0.45\textwidth]{img/mira_7a}
   %     \caption{Comparing the resulting speeds (red) and the speed components $\dot{x}(t)$ (black) and $\dot{y}(t)$ (blue) of the corresponding individuals in the case of the ZNP and ZMP XXX. }
%\label{speeds}
%\end{figure}
\begin{figure}
        \centering
~\includegraphics[width=0.33\textwidth]{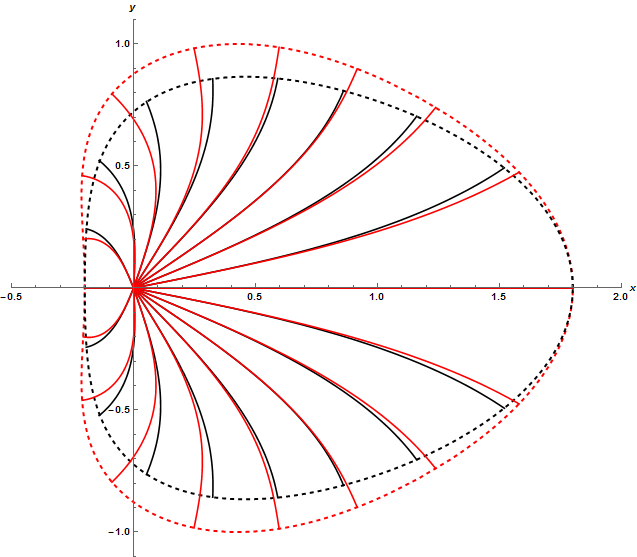} 
~\includegraphics[width=0.34\textwidth]{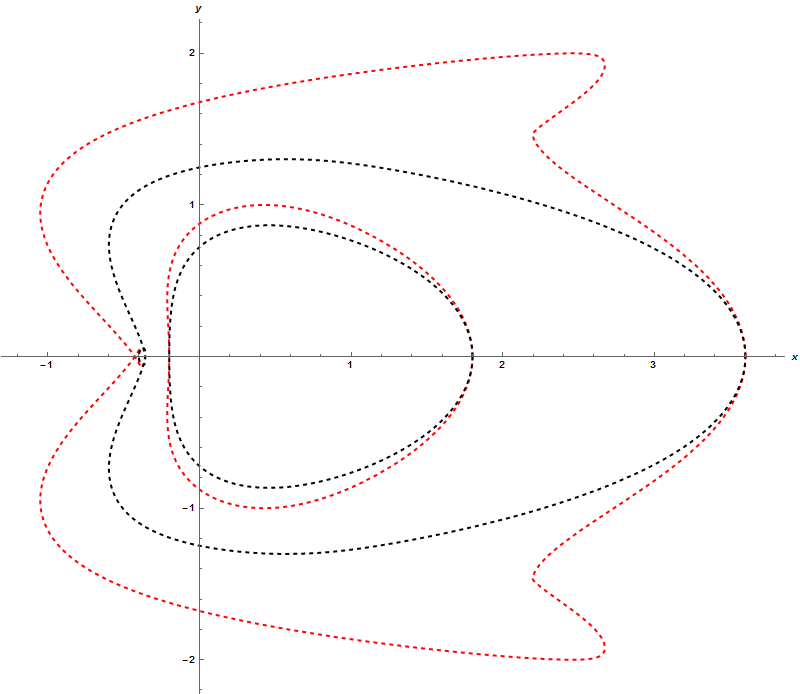} 
~\includegraphics[width=0.28\textwidth]{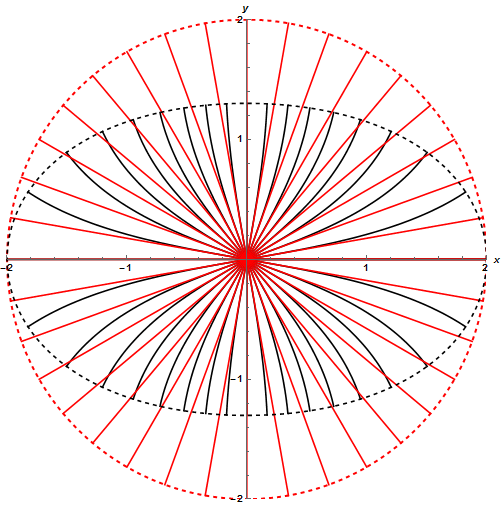}
%~\includegraphics[width=0.3\textwidth]{img/mira_8}
%~\includegraphics[width=0.3\textwidth]{img/mira_8eee}
%~\includegraphics[width=0.35\textwidth]{img/mira_8d}
%~\includegraphics[width=0.45\textwidth]{img/mira_8b}
%~\includegraphics[width=0.45\textwidth]{img/mira_8c}
        \caption{On the left comparing the unit indicatrices of $\tilde{F}$-geodesics (black dotted) and $F$-geodesics (red dotted), together with the corresponding geodesics (solid), $\bigtriangleup\varphi_0=\frac{\pi}{9}$. In the middle the comparison of $F$- and $\tilde{F}$-indicatrix with $t\in\{1, 2\}$. On the right $\tilde{F}$-indicatrix (black dotted) versus $F$-indicatrix (red dotted) in the absence of the perturbation ($W=0$), together with the corresponding geodesics (solid), $\bigtriangleup\varphi_0=\frac{\pi}{18}$, $t=2$.}
\label{speeds}
\end{figure}
\noindent
At first glance at the right-hand side graph in Figure \ref{rb_7c_mira} it could seem that a ship following $\tilde{F}$-geodesic, that is the time-optimal trajectory considering the variable $|U(x, y)|$, can reach given destination in some cases in shorter time than proceeding with a constant unit speed. For example, some of the black paths, so representing the passages at slower speed, reach the more distant points along the negative part of $x$-axis than the red ones in the same time ($t=5$). Such a deduction would be then in contradiction with Lemma \ref{lemacik}. This could be due to the fact that the set of the initial optimal angles $\varphi_0$ has been limited in the simulation. In fact, any given destination will be reached faster in the case of the original problem, however commencing with different $\varphi_0$. Both compared families are illustrated with the same values of the initial relative courses. In general, the variety of solutions depends here on the types of the space-dependent function $|U|$ and $W$. If we double the increments, i.e. $\bigtriangleup\varphi_0=\frac{\pi}{18}$ as it is presented in the left-hand side graph of Figure \ref{rb_7c_mira}, then we observe that another $F$-geodesic referring to different $\varphi_0$ will reach faster the desired crossing point of $F$- and $\tilde{F}$-geodesic. Indeed, the complete view is given by comparisons of the indicatrices as is shown in Figure \ref{speeds}. The left-hand side graph presents $\tilde{F}$-indicatrix (black dotted) together with $F$-indicatrix (red dotted) for $t=1$. Similarly, the corresponding indicatrices with $t\in\{1, 2\}$ are compared in the middle. Thus, we see that the former is included in the latter. The only crossing points are located on the horizontal axis. These refer to the cases when a ship sails exactly with or against the current $W$, i.e. $\varphi_0\in\{0, \pi\}$ and with a constant unit speed. In the proposed generalization of the problem this is the special case since $\tilde{F}$-geodesics become $F$-geodesics in the entire passage, i.e. $y=0\Rightarrow |U(x,y)|=\cos y=1$. The example is thus consistent with Lemma \ref{lemacik}. 

Finally, we also compare the solutions to the Zermelo navigation problem in the absence of the perturbation $W$ and the corresponding indicatrices with $t=2$. These are shown on the right in Figure \ref{speeds}. In the original version of the problem the solutions are obviously the straight lines (marked by the red solid rays coming from the origin) so the indicatrices are the concentric circles. Now, taking into consideration $|U(x, y)|$ which is not constant in space the solutions (black solid) are not $h$-geodesics any more. Instead, they are the geodesics of the new Riemannian metric $\tilde{a}$ conformal to $h$, where $|(u, v)|_{\tilde{a}}=\frac{1}{\cos y}\sqrt{u^2+v^2}$. In the example they form the ellipse-shaped indicatrices. Following the background $h$-geodesics connecting two given points on $M$ in the absence of a wind gives longer travel time than corresponding $\tilde{a}$-geodesics which are of greater $h$-length. This fact also makes a difference in comparison to the original approach to the navigation problem.  

%Indeed, a Randers geodesic connecting two fixed points of $M$ in generalized Zermelo' s navigation can have shorter $F$- length than in a classical navigation when both correspond to the same initial optimal angles $\varphi_0$ - TO JEDNAK NIEPRAWDA GDYZ LEMMA 2.1 TO ROZSTRZYGA DLA WSZELKICH KOMBINACJI  ZMIENNYCH: METRYKI h, POLA W I ZMIANY PREDKOSCI WLASNEJ |u|. Chodzilo mi poczatkowo o to, ze dla pewnych tych samych katów pocz. \varphi_0 obie geodezyjne dla tego samego czasu t beda mialy rozne konce - wiec sa nieporownywalne (wtedy ze moze sie zdarzyc, ze F-dlugosc > \tilde{F}-dlugosc). Jednak jesli sie przecinaja te geodezyjne, czyli łącza te same punkty, to i tak F-geodezyjna bedzie szybsza (F-długosc bedzie < \tilde{F}-długosc) ZAWSZE, right?  

%%%%%%%%%%%%%%%%%%%%%%%%%%%%%%%%%%%%%%%%%%%%%%

\section{Conclusions}%Final remarks

In the paper we investigated the generalization of the Zermelo navigation in a purely geometric form admitting a space dependence of a ship's own speed. Proposition \ref{primero} establishes the direct relation between the Randers geodesics and the time-optimal paths as the solutions to the navigation problem introducing additional degree of freedom. Having one more variable $|u(x)|_h$ the bijection is established between Randers spaces represented by pairs  $(\tilde{\alpha}, \tilde{\beta})$ and triples $(h, W, |u|_h)$ or pairs $(h,\tilde{W})$ in the original approach, where $W^i$ is replaced by a modified wind $\tilde{W}^i=\frac{1}{|u(x)|_h}W^i$ with $|W|\neq 0$. %THINK - SKALOWANIE TAKIE NIE DZIAŁA GDY $|W|=0$, TAK? 
In this way it generalizes the earlier theorem, i.e. Proposition 1.1 in \cite{colleen_shen} which became the point of reference and the motivation for our research. Also, the presented approach increases the variety of the scenarios and the corresponding solutions influenced by the new spatial  function $|u(x)|_h$. In particular, in contrast to the original version of the problem the absence of a wind ($W=0$) is not the sufficient condition for the background $h$-geodesics to be the solutions to the generalized Zermelo navigation problem on $(M, h)$. This holds now up to scaling if additionally $h(u(x), u(x))=const.$ Such a case occurs, in particular, in the original Zermelo navigation on Riemannian manifolds which now becomes the special case of the generalized problem. The solutions represented by $h$-geodesics are then "slower", that is with greater transit time in comparison to the standard formulation since $h(u, u)\neq1$. Furthermore, according to Lemma \ref{lemacik} the same also holds for non-Riemannian solutions, that is under acting vector field $W$ as for any $|u(x)|_h$ different from a constant unit speed $\tilde{F}$-length $>$ $F$-length. The above fact also implies that following the connections of greater $h$-length on $M$ can result in shorter time even when $W=0$, so not only in the presence of a perturbation. This differs from the original approach to the navigation problem. Admitting a space dependence of a ship's own speed also brings us closer to the real applications, for instance in air or marine navigation (cf. \cite{kopi4}). Although we expanded the scenarios and their solution there is still something to yield. More advanced generalization would also admit both: the time dependence of acting perturbation $W(t,x )$ and a ship's own speed $|u(t, x)|_h$ as well as the strong winds, i.e. $h(W, W)\geq1$. Note that from the early beginning the first contributions on the problem with the Euclidean backgrounds which we referred to in the introduction coincide with the above issues (cf. \cite{mira, zermelo, zermelo2, mcshane}). 

To complete, let us mention and gather some other purely geometric contributions expanding the investigations on the Zermelo navigation such as with the use of Kropina metric \cite{kropina} in Finsler geometry or in reference to the Lorentzian metric with the general relativity background, e.g. \cite{sanchez} as well as the variational solution with the application of the Euler-Lagrange equations \cite{palacek}, followed by the author in \cite{kopi}. We see that the initially formulated problem can be indeed generalized from different points of view. Therefore, the Zermelo navigation can be researched as the problem itself and the technique applied to solve other open geometric problems. The former seems to be closer to optimal control and the latter to differential geometry. In fact, the problem combines fruitfully the notions of both areas and physics showing the efficient equivalences (dualities). A task which seems to be difficult to be solved in one field may occur simpler in another making use of Zermelo's problem. Having in mind the above generalizations we conclude that it might be interesting to consider the navigation problem with the non-Riemannian background metrics. 

%Zauważmy jeszcze, że zarówno McShane, Zermelo jak i Mira mieli zależność prędkości i wiatru $W$ od czasu, czego w geometrii Finslera nie mamy (czasu nie zgeometryzowaliśmy). Jeżeli chodzi o silny wiatr, to Zermelo dopuszczał, Mira chyba nie, ale nie jestem pewien, z kolei McShane - nie wiem, chyba tak(?).   

\ 

\noindent
\textbf{Acknowledgement} \quad The research was supported by a grant from the Polish National Science Center under research project number 2013/09/N/ST10/02537.  

%\bigskip
%\noindent
%\textbf{Conflict of Interest:} The author declares that there is no conflict of interest.

%\bibliographystyle{apalike}
%\bibliographystyle{model1b-num-names}
%\bibliographystyle{elsarticle-num}
\bibliographystyle{plain}
\bibliography{pk}

\begin{thebibliography}{10}

\bibitem{arrow}
Kenneth~J. Arrow.
\newblock {On the use of winds in flight planning}.
\newblock {\em Journal of Meteorology}, 6:150--159, 1949.

\bibitem{colleen_shen}
David Bao, Colleen Robles, and Zhongmin Shen.
\newblock {Zermelo navigation on Riemannian manifolds}.
\newblock {\em Journal of Differential Geometry}, 66(3):377--435, 2004.

\bibitem{brody}
Dorje~C. Brody and David~M. Meier.
\newblock {Solution to the quantum Zermelo navigation problem}.
\newblock {\em Physical Review Letters}, 114(100502), 2015.

\bibitem{sanchez}
Erasmo Caponio, Miguel~Angel Javaloyes, and Miguel Sánchez.
\newblock {Wind Finslerian structures: from Zermelo's navigation to the
  causality of spacetimes}.
\newblock preprint arXiv:1407.5494, 2015.

\bibitem{caratheodory}
Constantin Carath\'{e}odory.
\newblock {\em {Calculus of Variations and Partial Differential Equations of
  the First Order}}.
\newblock {American Mathematical Society{,} Chelsea Publishing}, 1935 (reprint
  2008).

\bibitem{palacek}
Radomir~Palá\v cek and Olga Krupková.
\newblock {On the Zermelo problem in Riemannian manifolds}.
\newblock {\em Balkan Journal of Geometry and Its Applications}, 17(2):77--81,
  2012.

\bibitem{chern_shen}
Shiing-Shen Chern and Zhongmin Shen.
\newblock {\em Riemann-Finsler geometry}.
\newblock Nankai tracts in mathematics. World Scientific, River Edge (N.J.),
  London, Singapore, 2005.

\bibitem{mira}
Aureliano De~Mira~Fernandes.
\newblock {Sul problema brachistocrono di Zermelo}.
\newblock {\em Rendiconti della Accademia dei Lincei}, 15:47--52, 1932.

\bibitem{herdeiro}
Carlos A.~R. Herdeiro.
\newblock {Mira Fernandes and a generalised Zermelo problem: purely geometric
  formulations}.
\newblock {\em Boletim da Sociedade Portuguesa de Matemática, (Número
  Especial - Aureliano Mira Fernandes), editores L. Saraiva e J. T. Pinto,
  (Sociedade Portuguesa de Matemática, Lisboa, 2010), p. 179, baseado na
  palestra convidada da conferência “Mira Fernandes and his age - An
  historical conference in honour of Aureliano de Mira Fernandes
  (1884-1958)’’, Instituto Superior Técnico, Lisboa, Junho 2009}, pages
  1--13, 2010.

\bibitem{kopi}
Piotr Kopacz.
\newblock {Application of codimension one foliation in Zermelo's problem on
  Riemannian manifolds}.
\newblock {\em Differential Geometry and its Applications}, 35:334--349, 2014.

\bibitem{kopi4}
Piotr Kopacz.
\newblock {Application of planar Randers geodesics with river-type perturbation
  in search models}.
\newblock preprint arXiv:1511.07259, 2015.

\bibitem{levi}
Tullio Levi-Civita.
\newblock {\"{U}ber Zermelo's Luftfahrtproblem}.
\newblock {\em Zeitschrift f$\text{\"{u}}$r Angewandte Mathematik und
  Mechanik}, 11(4):314--322, 1931.

\bibitem{matsumoto}
Makoto Matsumoto.
\newblock {A slope of a mountain is a Finsler surface with respect to a time
  measure}.
\newblock {\em J. Math. Kyoto Univ.}, 29(1):17--25, 1989.

\bibitem{mcshane}
Edward~J. McShane.
\newblock {A Navigation Problem in the Calculus of Variations}.
\newblock {\em American Journal of Mathematics}, 59(2):327--334, 1937.

\bibitem{CR}
Colleen Robles.
\newblock {Geodesics in Randers spaces of constant curvature}.
\newblock {\em Transactions of the American Mathematical Society},
  359(4):1633--1651, 2007.

\bibitem{kropina}
Ryozo Yoshikawa and Sorin~V. Sabau.
\newblock {Kropina metrics and Zermelo navigation on Riemannian manifolds}.
\newblock {\em Geometriae Dedicata}, 171(1):119--148, 2013.

\bibitem{zermelo2}
Ernst Zermelo.
\newblock {\"{U}ber die Navigation in der Luft als Problem der
  Variationsrechnung}.
\newblock {\em Jahresbericht der Deutschen Mathematiker-Vereinigung},
  89(00):44--48, 1930.

\bibitem{zermelo}
Ernst Zermelo.
\newblock {\"{U}ber das Navigationsproblem bei ruhender oder ver\"{a}nderlicher
  Windverteilung}.
\newblock {\em Zeitschrift f$\text{\"{u}}$r Angewandte Mathematik und
  Mechanik}, 11(00):114--124, 1931.

\end{thebibliography}

\end{document}